\let\@@enum@org\@@enum@
\def\@@enum@[#1]{\@@enum@org[\normalfont #1]}
\newtheorem{thm}{Theorem}[section]
\newtheorem{prop}[thm]{Proposition}
\newtheorem*{claim*}{Claim}
\theoremstyle{remark}
\newtheorem*{hint}{Hint}
\theoremstyle{definition}
\newtheorem{defn}[thm]{Definition}
\newtheorem{prob}{Problem}[section]
\newcommand\be{\begin{enumerate}}
\newcommand\ee{\end{enumerate}}
\newcommand\bp{\begin{proof}}
\newcommand\ep{\end{proof}}
\newcommand\bpp{\begin{prop}}
\newcommand\epp{\end{prop}}
\newcommand\bpb{\begin{prob}}
\newcommand\epb{\end{prob}}
\newcommand\bd{\begin{defn}}
\newcommand\ed{\end{defn}}
\newcommand\bh{\begin{hint}}
\newcommand\eh{\end{hint}}
\newcommand\C{\mathbb{C}}
\newcommand\SU{\operatorname{SU}}
\newcommand\Mod{\operatorname{Mod}}
\newcommand\mC{\mathcal{C}}
\def\thetitle{{Irreducibility of quantum representations of mapping class groups with boundary}}
\def\theshorttitle{}
\begin{document}
\raggedbottom
\title[\theshorttitle]\thetitle
\date{\today}

\subjclass[2010]{Primary
57R56.  
Secondary 
20F34.  
57M05  
}
\keywords{TQFT representation; curve operator; skein algebra; irreducible representation; point-pushing; Birman Exact Sequence}

\author{Thomas Koberda}
\address{Department of Mathematics, University of Virginia, Charlottesville, VA 22904-4137, USA}
\email{thomas.koberda@gmail.com}

\author{Ramanujan Santharoubane}
\address{Department of Mathematics, University of Virginia, Charlottesville, VA 22904-4137, USA}
\email{ramanujan.santharoubane@gmail.com}

\begin{abstract} We prove that the Witten--Reshetikhin--Turaev $\mathrm{SU}(2)$ quantum representations of mapping class groups are always irreducible in the case of surfaces equipped with colored banded points, provided that at least one banded point is colored by one. We thus generalize a well--known result due to J. Roberts.

\end{abstract}

\maketitle

\section{Introduction}
Since their inception, the irreducibility of TQFT representations of mapping class groups has remained an often intractable question, and it remains open in general for mapping class groups of closed surfaces. In this article, we consider surfaces with a nonzero number of colored boundary components, under the assumption that at least one of them is colored by $1$. We prove that in this case, the TQFT representations of the corresponding mapping class groups are irreducible, without any number--theoretic assumptions on the level of the representation. As is customary in TQFT, we think of the boundary components of a surface as banded points, i.e. embedded oriented copies of the closed unit interval.

\subsection{Statement of the main result}

Let $S=S_{g}$ be a closed and orientable surface of genus $g\geq 0$, let $n\geq 1$ be an integer such that $4 \le 2g+n$, and let $\underline{k} = (k_1, \ldots , k_n)$ be an $n$--tuple of positive integers. We denote by $\big(S,\underline{k}\big)$ the surface $S$ equipped with $n$ banded points colored by $\{k_1, \ldots , k_n\}$. We denote by $\Mod(S,n)$ the group of diffeomorphisms of $S$ fixing the $n$ banded points up to isotopy.

For $p \ge 6$ an even integer, we write $$\rho_p : \Mod(S,n) \to \mathrm{PAut}\big(V_p\big(S,\underline{k}\big)\big)$$ for the quantum representation of $\Mod(S,n)$ arising from Witten-Reshetikhin-Turaev $\mathrm{SU}(2)$--TQFT. In order for the representation to be defined, we will always assume $p\geq\max_j k_j +4$. The following is our main result:

\begin{thm}\label{thm:main}
Let $n\geq 1$ and suppose that at least one of the colors $k_1, \ldots , k_n$ is $1$. Then the representation $\rho_p$ is irreducible for all $p$.
\end{thm}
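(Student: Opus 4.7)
My approach is to apply Schur's lemma after identifying a large subgroup of $\Mod(S,n)$ whose image under $\rho_p$ is explicit. Let $p_1$ denote the banded point colored by $1$. Forgetting $p_1$ produces the Birman Exact Sequence
\[
1 \to \pi_1\big(S \setminus \{p_2, \ldots, p_n\}, p_1\big) \to \Mod(S,n) \to \Mod(S, n-1) \to 1,
\]
in which the kernel acts on $V_p(S,\underline{k})$ by point-pushing: a loop $\gamma$ based at $p_1$ is sent to the mapping class that pushes $p_1$ along $\gamma$. The key topological input is that, at the level of the projective representation $\rho_p$, this point-pushing action coincides with the action of the color-$1$ curve operator $T_\gamma^{(1)}$ on $V_p(S,\underline{k})$. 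This is where the hypothesis that some color equals $1$ is used crucially: no other color gives direct access to such an effective family of curve operators inside $\rho_p(\Mod(S,n))$.

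Now suppose that $T$ is an endomorphism of $V_p(S,\underline{k})$ that projectively commutes with $\rho_p(\Mod(S,n))$. Choose a pants decomposition $\mathcal{P}$ of $S$ compatible with the banded points, and work in the associated admissible basis $\{v_\sigma\}$ indexed by compatible colorings $\sigma$ of the curves of $\mathcal{P}$. Dehn twists along curves of $\mathcal{P}$ lie in $\Mod(S,n)$ and act diagonally in this basis with eigenvalues determined by $\sigma$ via standard TQFT formulas. Since $T$ commutes with every such Dehn twist, and since a judicious choice of $\mathcal{P}$ can be made so that the tuple of Dehn twist eigenvalues separates admissible colorings, one concludes that $T$ is diagonal, say $T v_\sigma = \lambda_\sigma v_\sigma$.

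It remains to show that $\lambda_\sigma$ is independent of $\sigma$, and here the color-$1$ curve operators are decisive. By the $\mathrm{SU}(2)$ fusion rule $1 \otimes k = (k-1) \oplus (k+1)$, the operator $T_\gamma^{(1)}$ sends $v_\sigma$ to a linear combination of basis vectors $v_{\sigma'}$ where $\sigma'$ differs from $\sigma$ by $\pm 1$ on each curve of $\mathcal{P}$ crossed by $\gamma$, with coefficients given by explicit $q$-$6j$-symbols. Varying $\gamma$ over $\pi_1(S \setminus \{p_2,\ldots,p_n\}, p_1)$, these color-$1$ fusion moves generate a connected graph on the set of admissible colorings; combined with the non-vanishing of the relevant $q$-$6j$-symbols and with the commutation $[T, T_\gamma^{(1)}] = 0$, this connectedness forces the scalars $\lambda_\sigma$ to coincide, and $T$ is a scalar.

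The main obstacle is the simultaneous verification of the two number-theoretic facts that support the argument: that some pants decomposition has Dehn twist multi-eigenvalues separating admissible colorings at every level $p$, and that the matrix entries of $T_\gamma^{(1)}$ between adjacent admissible colorings are nowhere vanishing. Both are delicate at small $p$, since they involve $2p$-th roots of unity; the $k_i=1$ hypothesis makes them uniformly tractable, because the color-$1$ fusion graph is the simplest non-trivial one and its $6j$-symbols admit closed formulas, whereas for a generic list $\underline{k}$ of colors with no entry equal to $1$, both the connectedness of the fusion graph and the non-vanishing of curve operator entries can fail.
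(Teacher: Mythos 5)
Your proposal has a genuine gap at the diagonalization step, and it is precisely the gap that the paper's argument is designed to avoid. You assert that ``a judicious choice of $\mathcal{P}$ can be made so that the tuple of Dehn twist eigenvalues separates admissible colorings'' at every level $p$. This is false in general: the eigenvalue of the twist along a pants curve colored by $j$ is a fixed $2p$-th root of unity depending on $j$ (up to sign, $A^{j^2+2j}$), and when $p/2$ is composite distinct admissible colors can yield identical eigenvalues. This is exactly why Roberts needed $p/2$ prime and why Korinman's extensions require $p/2$ to be a product of two odd primes or the square of one. The hypothesis $k_1=1$ concerns the colors of the banded points, not the interior pants curves, and does nothing to restore eigenvalue separation. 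Without that separation, commuting with the Dehn twists of $\mathcal{P}$ does not force your intertwiner $T$ to be diagonal, and the rest of the argument (connectedness of the color-$1$ fusion graph, non-vanishing of $6j$-symbols) never gets off the ground. You flag these as ``delicate'' verifications, but the first one is not merely delicate --- it fails for the very levels $p$ the theorem is meant to cover.

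The paper takes a different route that bypasses Schur's lemma spectral analysis entirely. First (Proposition 3.1), for a simple closed curve $\gamma$ the skein relation applied in an annular neighborhood of the color-$1$ point $x_1$ resolves the curve operator $Z_p(\gamma)$ as an explicit linear combination $A\,\rho_p(\delta_+) + A^{-1}\rho_p(\delta_-)$ of two point-pushing operators (this is the correct relationship; your statement that the point-push ``coincides with'' the color-$1$ curve operator is not right --- the curve operator is a sum of two point-pushes, which is why $k_1=1$ and the Kauffman bracket relation are both essential). Since simple closed curves generate the skein algebra, this gives $Z_p(\mathcal{S}_A(\hat{S})) = \C[\Gamma_p]$ where $\Gamma_p$ is the image of the point-pushing subgroup. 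Second (Proposition 3.2), the skein algebra of $\hat{S}$ surjects onto $\mathrm{End}(V_p(S,\underline{k}))$ for \emph{all} $p$ and all colors; this is a general consequence of the surjectivity of the skein module of a handlebody onto the TQFT space of its boundary, with no number theory involved. Combining the two gives $\C[\Gamma_p] = \mathrm{End}(V_p(S,\underline{k}))$, which is stronger than irreducibility and requires only the point-pushing subgroup. If you want to salvage your approach, you would need to replace the Dehn twist diagonalization by an argument that works at composite levels, and at that point you would essentially be reconstructing the skein-algebra surjectivity statement.
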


We state Theorem \ref{thm:main} for its brevity. We actually prove a stronger fact: the restriction of $\rho_p$ to the (banded) point-pushing subgroup of the point colored by $1$ is irreducible.

\subsection{Notes}
Roberts~\cite{Roberts} proved that the quantum $\SU(2)$ representations of closed mapping class groups are irreducible in the case where $p/2$ is prime. The fundamental fact exploited by Roberts is that when $p/2$ is prime, then the Dehn twists act by diagonalizable linear maps. Moreover, if we consider a maximal collection of commuting Dehn twists, then the joint spectrum of their action has no repeated eigenvalues. The closed mapping class group representations were proved to be irreducible by Korinman~\cite{Korinman} in the cases where $p$ is twice the product of two distinct odd primes and where $p$ is twice the square of an odd prime. Korinman also produces examples where the representations are reducible.

Other important examples of reducibility include~\cite[Theorem 7.9]{bhmv}, where it is proved that in the case where $p \equiv 2  \pmod 4$ and all the banded points on the surface are colored by even numbers, the quantum representations are reducible. In~\cite{Andersen}, Andersen and Fjelstad found three exceptional levels where the quantum representations are reducible as well.

The advantage of the present approach is the fact that we do not place any restrictions on $p$, other than the minimal conditions required to define the quantum $\SU(2)$ representation. We really do need banded points on $S$ and at least one of these colored by $1$, since our proof relies on the quantum representations of surface groups as defined by the authors~\cite{KoberdaSantharoubane}. Theorem 7.9 of \cite{bhmv} and the work of Korinman cited above suggest that this is an essential hypothesis, not just an artifact of the proof.

\section{Background}
We retain the notation from the previous section for the remainder of this paper. Let $\underline{x}=\{x_1, \ldots, x_{n}\}$ be the $n$ banded points on $S$ colored by $k_1, \ldots , k_n$ respectively, and let $\hat{S}$ be the surface $S$ minus $n$ disjoint opened discs containing the $x_j$.

\subsection{TQFT vector spaces} \label{subsect:su2 tqft}
In this subsection, we summarize relevant features of TQFT vector spaces, following~\cite{KoberdaSantharoubane} nearly verbatim.
One can define a certain cobordism category $\mC$ of closed surfaces with colored banded points, in which the cobordisms are decorated by uni-trivalent colored banded graphs. The $\SU(2)$--TQFT is a functor $Z_p$ from the category $\mC$ to the category of finite dimensional vector spaces over $\C$. See~\cite{bhmv}.

Let $\mathcal{H}$ be a handlebody with $\partial{\mathcal{H}} = S$, and let $G$ be a uni-trivalent banded graph such that $\mathcal{H}$ retracts to $G$. We suppose that $G$ meets the boundary of $\mathcal{H}$ exactly at the banded points $\underline{x}$ and this intersection consists exactly of the degree one ends of $G$. A \emph{$p$--admissible coloring} of $G$ is an assignment of an integer to each edge of $G$ such that at each degree three vertex $v$ of $G$, the three (non--negative integer) colors $\{a,b,c\}$ coloring edges meeting at $v$ satisfy certain natural compatibility conditions, and where the color of an edge terminating at a banded point $x_i$ must have the color $k_i$. The details of these conditions are not important for this paper, and the interested reader is directed to~\cite{bhmv}.

In what follows, $A$ is a $2p$--primitive root of unity, where $p$ is a sufficiently large integer as in the assumptions of Theorem~\ref{thm:main}. To any $p$-admissible coloring $c$ of $G$, there is a canonical way to associate an element of the skein module \[G^c \in S_{A}(\mathcal{H},\big(S,\underline{k}\big)),\] by cabling the edges of $G$ by appropriate Jones-Wenzl idempotents (see \cite[Section 4]{bhmv} for more detail). There is natural surjective map (see \cite[Proposition 1.9]{bhmv})
$$s :  S_{A}\big(\mathcal{H},\big(S,\underline{k}\big)\big) \to  V_p\big(S,\underline{k}\big).$$ It turns out that the images of the vectors associated to $p$--admissible coloring give a finite basis for $ V_p\big(S,\underline{k}\big)$.

The vector space $V_p\big(S,\underline{k}\big)$ is endowed with a natural hermitian form denoted by $\langle  \, . \, , \, . \, \rangle$, and the basis $\{ G^c \, | \, c \, \, \text{is $p$-admissible} \}$ is orthogonal with respect to this hermitian form (see \cite[Theorem 4.11]{bhmv}).

\subsection{The skein algebra of a surface}
 We denote by $\mathcal{S}_A\big(\hat{S}\big)$ the skein algebra of $\hat{S} \times [0,1]$ with complex coefficients. Recall that it is the complex vector space generated by isotopy classes of banded links in the interior of $\hat{S} \times [0,1]$, subject to the following local relations:

\begin{align} \begin{minipage}[c]{1cm}
\includegraphics[scale = 0.09]{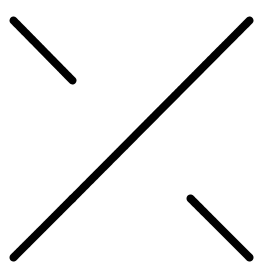} \end{minipage}&= A \, \,\, \begin{minipage}[c]{0.8cm}
\includegraphics[scale = 0.09]{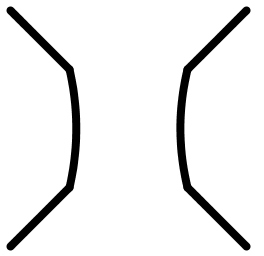} \end{minipage} + A^{-1} \, \,\, \begin{minipage}[c]{1cm}
\includegraphics[scale = 0.09]{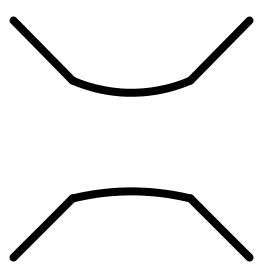} \end{minipage} \notag \\
\notag \\
\begin{minipage}[c]{1cm}
\includegraphics[scale = 0.04]{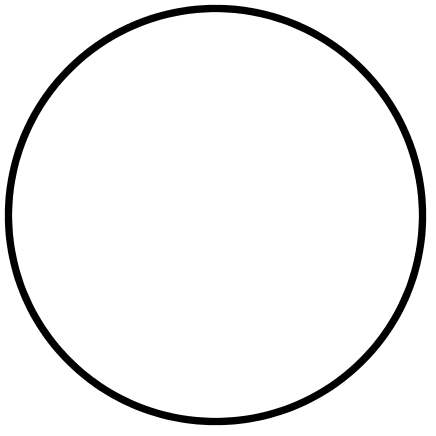} \end{minipage} & = -A^2-A^{-2} \notag 
\end{align}

By stacking banded links, one obtains an algebra structure on $\mathcal{S}_A\big(\hat{S}\big)$.

\subsection{Curve operators}

Let $L$ be a banded link in the interior of $\hat{S} \times [0,1]$. We define the cobordism $C_L$ as $S \times [0,1]$ equipped with the colored banded tangle \[ L \, \cup \, \left(x \times [0,1] \right),\] where here $x = x_1 \cup \cdots \cup x_{n}$ and where each $x_j \times [0,1]$ is colored by $k_j$. By the axioms of the TQFT, one can show that $C_{L}$ defines an operator $Z_p(L) \in \mathrm{End}\big(V_p\big(S,\underline{k}\big)\big)$, called the \emph{curve operator} associated to $L$. Here, $\mathrm{End}\big(V_p\big(S,\underline{k}\big)\big)$ denotes the endomorphisms of $V_p\big(S,\underline{k}\big)$ viewed as a complex vector space, or in other words a matrix algebra over $\mathbb{C}$. It is well known that 
$$Z_p : L \in \mathcal{S}_A\big(\hat{S}\big) \mapsto Z_p(L) \in \mathrm{End}\big(V_p\big(S,\underline{k}\big)\big)$$
is a morphism of algebras.

Any multi-curve (disjoint union of simple closed curves) $\gamma$ can be viewed as an element of $ \mathcal{S}_A\big(\hat{S}\big)$ by associating \[\gamma \mapsto \gamma \times \left[\frac{1}{2},\frac{3}{4}\right] \in  \mathcal{S}_A\big(\hat{S}\big).\] It is well-known that the set of isotopy classes of multi-curves forms a basis for $\mathcal{S}_A\big(\hat{S}\big)$.

\subsection{The Birman Exact Sequence and quantum representations}
We recall the construction of quantum representation of surface groups as carried out by the authors~\cite{KoberdaSantharoubane}. Let $S'$ be the surface $ S \setminus \{x_2, \cdots, x_{n-1} \} $ and let $\Mod(S,n-1)$ be the group of diffeomorphisms of $S$ fixing the $x_2, \cdots, x_{n-1} $ up to isotopy. From the Birman Exact Sequence, the kernel of the canonical map $\Mod(S,n) \to \Mod(S,n-1)$ is a central extension of $\pi_1(S',x_1)$ (See~\cite{Birman,farbmargalit}). Hence by restriction, we have a representation
$$\rho_p : \pi_1(S',x_1) \to \mathrm{PAut}\big(V_p\big(S,\underline{k}\big)\big)$$

Since $x_1$ is a banded point, $\pi_1(S',x_1)$ has to be understood that the fundamental group of $S'$ based at a fixed point chosen on $x_1$.

For the proof of Theorem \ref{thm:main}, it is useful to have a cobordism description of the representation $\rho_p$ restricted to the point pushing subgroup. Let $\gamma : [0,1] \to S'$ be a loop such that $\gamma(0) = \gamma(1)$ is a point on $x_1$. Let $\hat{\gamma}$ be a banded tangle in $S \times [0,1]$ which retracts to $t \in [0,1] \mapsto (\gamma(t),t)$ and which agrees with $x_1 \times \{0,1 \}$ on $S \times \{0,1 \}$. Let $x' = x_2 \cup \cdots \cup x_{n}$. We define the cobordism $\mathcal{C}_{\gamma}$ as $S \times [0,1]$ with the colored banded tangle  $ \hat{\gamma} \, \cup \, \left(x' \times [0,1] \right)$ where each $x_j \times [0,1]$ is colored by $k_j$ and where $\hat{\gamma}$ is colored by $k_1$.

The TQFT functor takes $\mathcal{C}_{\gamma}$ and outputs an operator which is by definition $\rho_p(\gamma)$. We now show how to compute the action of the loop $\gamma$ on the basis of TQFT described in Subsection 
\ref{subsect:su2 tqft}. Let $\mathcal{H}$ be a handlebody with $\partial{\mathcal{H}} = S_g$, let $G$ be a uni-trivalent banded graph such that $\mathcal{H}$ retracts to $G$, and let $G_c$ be a $p$-admissible coloring of this graph. In the TQFT language, applying $\rho_p(\gamma)$ to $G_c$ simply means that we glue the cobordism $C_{\hat{\gamma}}$ to the handlebody $\mathcal{H}$ decorated by $G_c$. Through this gluing, one obtains the same handlebody $\mathcal{H}$, but with a different colored banded graph inside. To express this new banded colored graph in terms of the basis mentioned above, we apply the colored version of the Kauffman bracket and the Jones--Wenzl idempotents. Since the formulas are complicated, we refer the reader to~\cite{MasbaumVogel}.

\section{Proof of Theorem \ref{thm:main}}
Let $\Gamma_p=\rho_p(\pi_1(S',x_1))$ and let $\C[\Gamma_p]$ be the sub algebra of $ \mathrm{End}\big(V_p\big(S,\underline{k}\big)\big)$ generated by $\Gamma_p$.

\begin{prop}\label{prop:image} If $k_1=1$ then $$Z_p\left(\mathcal{S}_A\big(\hat{S}\big)\right) = \C[\Gamma_p]$$
\end{prop}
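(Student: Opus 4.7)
The strategy is to prove both inclusions by explicitly converting between the banded tangles that define $\rho_p(\gamma)$ for $\gamma \in \pi_1(S',x_1)$ and the banded tangles that define curve operators $Z_p(L)$ for $L \in \mathcal{S}_A(\hat{S})$. Both subalgebras of $\mathrm{End}(V_p(S,\underline{k}))$ arise from the TQFT functor applied to cobordisms of the form $S \times [0,1]$, decorated by banded tangles whose boundary behavior on $x_j \times \{0,1\}$ is identical: a single strand of color $k_j$ terminates on each $x_j \times \{0,1\}$. Consequently, both subalgebras factor through the relative skein module of $S \times [0,1]$ with this boundary data, and the proof reduces to identifying the images of the two generating families.

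For the inclusion $\C[\Gamma_p] \subseteq Z_p(\mathcal{S}_A(\hat{S}))$, given $\gamma \in \pi_1(S',x_1)$ consider the color-$1$ strand $\hat{\gamma}$ from $(x_1,0)$ to $(x_1,1)$. By isotoping $\hat{\gamma}$ so that it runs close to the singular configuration $(x_1 \times [0,1]) \cup (\gamma \times \{1/2\})$ and then resolving the resulting near-crossings using the Kauffman bracket, I aim to establish a skein identity
\[ [\hat{\gamma}] = \sum_i c_i\, [(x_1 \times [0,1]) \cup L_i] \]
in the relative skein module of $S \times [0,1]$, where each $L_i$ is a banded link in $\hat{S} \times [0,1]$. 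The hypothesis $k_1=1$ is essential here: it trivializes the color-$1$ Jones--Wenzl idempotent, so that the local skein moves on the color-$1$ strand produce a clean expansion into disjoint unions of a vertical strand with banded links, rather than terms involving nontrivial Jones--Wenzl projectors. Applying the TQFT functor and using that $Z_p$ is an algebra homomorphism then yields $\rho_p(\gamma) = \sum_i c_i Z_p(L_i) \in Z_p(\mathcal{S}_A(\hat{S}))$, as desired.

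For the reverse inclusion $Z_p(\mathcal{S}_A(\hat{S})) \subseteq \C[\Gamma_p]$, I would use that $\mathcal{S}_A(\hat{S})$ is generated as an algebra by isotopy classes of simple closed curves in $\hat{S}$, so it suffices to show $Z_p(\alpha) \in \C[\Gamma_p]$ for each such $\alpha$. Given $\alpha$, choose a loop $\gamma_\alpha \in \pi_1(S',x_1)$ consisting of an arc from $x_1$ to a point near $\alpha$, a traversal of $\alpha$, and the reverse arc back to $x_1$. Applying the skein identity of the previous paragraph to $\gamma_\alpha$ produces an expression for $\rho_p(\gamma_\alpha)$ whose leading term is a nonzero scalar multiple of $Z_p(\alpha)$, with the remaining terms involving curve operators for links of strictly lower complexity; an induction on this complexity then recovers $Z_p(\alpha)$ as an element of $\C[\Gamma_p]$.

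The main obstacle is the skein identity itself: expressing $[\hat{\gamma}]$ as a linear combination of tangles of the form $(x_1 \times [0,1]) \cup L_i$. The hypothesis $k_1=1$ is exactly what makes this identity clean, via the trivial color-$1$ Jones--Wenzl idempotent and the fusion $1 \otimes 1 = 0 \oplus 2$ of the fundamental representation. The technical heart of the argument lies in correctly tracking the Kauffman bracket scalars and the banded framings near $x_1$; once this identity is in hand, both inclusions follow formally from the algebra homomorphism property of $Z_p$ and standard generation results for skein algebras.
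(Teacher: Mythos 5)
Your proposal follows essentially the same route as the paper: the substantive inclusion $Z_p\big(\mathcal{S}_A\big(\hat{S}\big)\big) \subseteq \C[\Gamma_p]$ is obtained, exactly as in the paper, by reducing to simple closed curves, lassoing each curve $\alpha$ by a based loop through $x_1$ (equivalently, taking an annulus containing $x_1$ with core $\alpha$), and resolving the single crossing of the color--$1$ strand via the Kauffman relation; since the second resolution is just the trivial vertical strand, the identity reads $\rho_p(\gamma_\alpha)=A^{\pm 1}Z_p(\alpha)+A^{\mp 1}\mathrm{Id}$ and one solves for $Z_p(\alpha)$ directly, so your ``induction on complexity'' collapses to this one step. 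The one point to correct is your claim that $k_1=1$ is essential for the reverse inclusion $\C[\Gamma_p]\subseteq Z_p\big(\mathcal{S}_A\big(\hat{S}\big)\big)$: that inclusion is completely general and is immediate from the surjectivity of $Z_p$ (Proposition~\ref{prop:irred-skein}, following \cite{bhmv}), with no hypothesis on the colors; the hypothesis $k_1=1$ is needed only so that the pushed strand carries the trivial Jones--Wenzl idempotent and the crossing resolves by the bare Kauffman relation in the other direction.
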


\begin{proof} Let $\gamma$ be  simple closed curve in $S$ and let \[\hat{\gamma} = \gamma \times \left[\frac{1}{2},\frac{3}{4}\right].\] Let $\mathcal{U} \subset S$ be an open annulus containing $x_1$ and with core curve $\gamma$. Applying the skein relations inside $\mathcal{U} \times [0,1]$, we have the following in $ \mathrm{End}\big(V_p\big(S,\underline{k}\big)\big)$:
$$\begin{minipage}[c]{3.4cm}
\includegraphics[scale = 0.16]{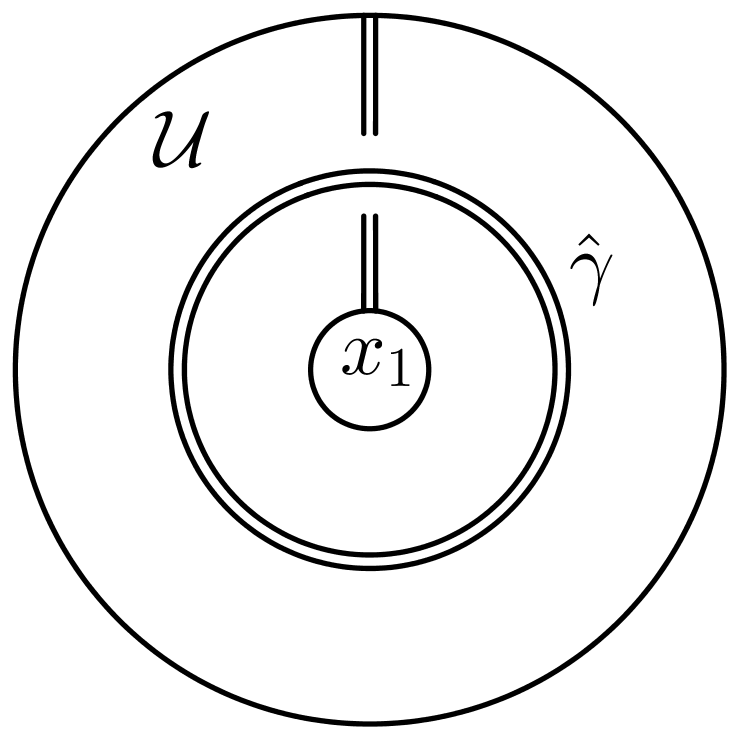} \end{minipage} =A \, \, \begin{minipage}[c]{3.4cm}
\includegraphics[scale = 0.16]{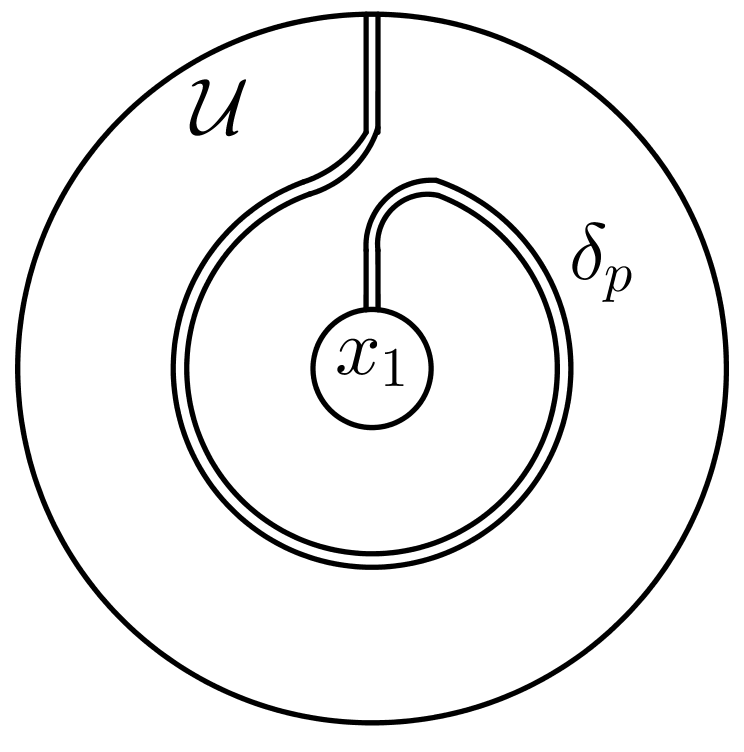} \end{minipage}+A^{-1} \, \, \begin{minipage}[c]{3.4cm}
\includegraphics[scale = 0.16]{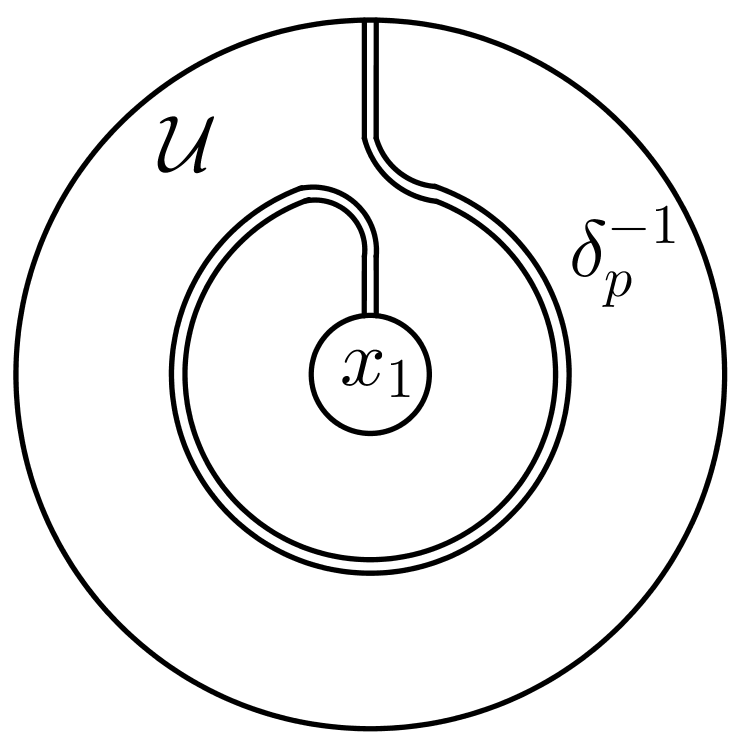} \end{minipage}$$
Observe that the image of $\delta_p$ in $\mathrm{PAut}\big(V_p\big(S,\underline{k}\big)\big)$ is precisely $\rho_p(\delta)$ where $\delta \in \pi_1(S',x_1)$. Hence $$Z_p(\gamma) \in \C[\Gamma_p].$$
This implies that \[Z_p\left(\mathcal{S}_A\big(\hat{S}\big)\right) \subseteq \C[\Gamma_p],\] since the algebra $\mathcal{S}_A\big(\hat{S}\big)$ is generated (as an algebra) by isotopy classes of simple closed curves. The reverse inclusion \[\C[\Gamma_p] \subseteq Z_p\left(\mathcal{S}_A\big(\hat{S}\big)\right)\] is completely general and is established in~\cite{bhmv}.
\end{proof}

In Proposition~\ref{prop:image}, we are unable to comment on the case $k_1>1$. In particular, the analysis of curve operators as carried out in the proof of Proposition~\ref{prop:image} breaks down and we are unable to determine whether the inclusion \[Z_p\left(\mathcal{S}_A\big(\hat{S}\big)\right) \subseteq \C[\Gamma_p]\] remains valid.

The following result can be deduced from Proposition 1.9 of~\cite{bhmv}. The statement given there is somewhat more technical, and we extract a statement more directly applicable to our context, and include a proof for the convenience of the reader.

\begin{prop} \label{prop:irred-skein} Let $\underline{k} = (k_1, \ldots , k_n)$ be arbitrary colors (i.e. without the assumption $k_1=1$).
Then the map $Z_p : \mathcal{S}_A\big(\hat{S}\big) \to \mathrm{End}\big(V_p\big(S,\underline{k}\big)\big)$ is surjective.

\end{prop}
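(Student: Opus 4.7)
The plan is to identify $\mathrm{End}\big(V_p(S,\underline{k})\big)$ with a TQFT vector space of a disconnected surface, and then to apply the general surjectivity of the skein-to-TQFT map (Proposition 1.9 of~\cite{bhmv}) to the 3--cobordism $N = S\times[0,1]$.

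First, combining the hermitian form on $V_p(S,\underline{k})$ described in Subsection~\ref{subsect:su2 tqft} with the monoidal axiom of the TQFT functor, one has canonical isomorphisms
$$ \mathrm{End}\big(V_p(S,\underline{k})\big) \;\cong\; V_p(S,\underline{k}) \otimes V_p(S,\underline{k})^{*} \;\cong\; V_p(S,\underline{k}) \otimes V_p(\bar{S},\underline{k}) \;\cong\; V_p\big(S \sqcup \bar{S},\, \underline{k}\sqcup \underline{k}\big), $$
where $\bar{S}$ denotes $S$ with reversed orientation, and the middle isomorphism is the TQFT duality $V_p(\bar{S},\underline{k}) \cong V_p(S,\underline{k})^{*}$ supplied by the hermitian form.

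Second, consider the $3$--manifold $N = S\times[0,1]$ equipped with the colored banded tangles $x_j \times [0,1]$ colored by $k_j$. Its boundary, as a decorated surface, is precisely $(S,\underline{k})\sqcup(\bar{S},\underline{k})$, and the relative skein module of $N$ with these boundary conditions is canonically identified with $\mathcal{S}_A\big(\hat{S}\big)$: every banded link in the interior of $N$ can be isotoped off of the vertical tangles $x_j\times[0,1]$ and therefore lies in $\hat{S}\times[0,1]$. Proposition 1.9 of~\cite{bhmv} applied to $N$ then produces a surjection
$$ \mathcal{S}_A\big(\hat{S}\big) \;\twoheadrightarrow\; V_p\big(\partial N,\,\underline{k}\sqcup\underline{k}\big) \;\cong\; \mathrm{End}\big(V_p(S,\underline{k})\big). $$

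Finally, I would verify that this surjection coincides with the curve operator map $Z_p$. This step is essentially tautological from the cobordism definition of $Z_p$: by construction $Z_p(L)$ is the endomorphism produced by the TQFT functor applied to the cobordism $C_L$ of Subsection on curve operators, and under the identifications of the first step this is exactly the operator assigned by the general surjection above to $L\in\mathcal{S}_A(\hat{S})$. The main obstacle I anticipate is purely bookkeeping: one must check that Proposition 1.9 of~\cite{bhmv} applies in the form needed here to the cobordism $N = S\times[0,1]$, which is not a handlebody when $g\geq 1$. If the statement in~\cite{bhmv} is formulated only for handlebodies, one would reduce to that case by cutting $N$ along a maximal system of compressing disks into a pair of handlebodies and tracking the compatibility of the skein relations with the resulting gluing; this is routine once the identifications above are set up correctly.
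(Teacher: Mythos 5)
There is a genuine gap at the step where you claim that ``the relative skein module of $N=S\times[0,1]$ with these boundary conditions is canonically identified with $\mathcal{S}_A\big(\hat{S}\big)$.'' The relative skein module whose surjectivity onto $V_p(\partial N)$ is supplied by Proposition 1.9 of~\cite{bhmv} is spanned by \emph{all} colored banded tangles meeting $\partial N$ in the prescribed colored points, not only by tangles of the special form $L\cup(x\times[0,1])$ with $L$ a link in $\hat{S}\times[0,1]$. The observation you make --- that a closed banded link in the interior of $N$ can be isotoped off the vertical strands --- is true but beside the point: the extra generators are tangles whose \emph{arc} components join $x_j\times\{0\}$ to $x_j\times\{1\}$ along a nontrivial path in $S$, and these are not isotopic rel boundary to the vertical tangle (they are precisely the cobordisms $\mathcal{C}_\gamma$ realizing point-pushing). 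Consequently Proposition 1.9 applied to $N$ only gives surjectivity of the full relative skein module onto $V_p(\partial N)\cong\mathrm{End}\big(V_p\big(S,\underline{k}\big)\big)$, and deducing surjectivity of the restriction to the subspace $\mathcal{S}_A\big(\hat{S}\big)$ is exactly the content of the proposition being proved; the argument as written is circular at this point. The issue you do flag (that $N$ is not a handlebody) is comparatively minor.

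The paper's proof sidesteps arcs entirely by choosing a different manifold: it uses the genuine handlebody $\hat{S}\times[0,1]$, whose boundary $\Sigma=\partial\big(\hat{S}\times[0,1]\big)$ is a \emph{closed unmarked} surface, so that Proposition 1.9 of~\cite{bhmv} gives surjectivity of the honest link skein module $s:\mathcal{S}_A\big(\hat{S}\big)\to V_p(\Sigma)$ with no relative tangles appearing. The colors $\underline{k}$ are then reinstated by projecting $V_p(\Sigma)$ onto the joint eigenspace
$$ E(\underline{k})=\bigcap_{j=1}^{n}\ker\Big(Z_p(\alpha_j)+A^{2k_j+2}+A^{-2k_j-2}\Big) $$
of the boundary-curve operators, and $E(\underline{k})$ is identified with $\mathrm{End}\big(V_p\big(S,\underline{k}\big)\big)$ via an explicit isomorphism on the basis of colored graphs. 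If you want to rescue your approach, you would need to replace the false identification by an argument that the ``vertical-tangle-plus-links'' subspace already surjects, and the natural way to do that is essentially to reroute through the eigenspace projection as in the paper.
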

\begin{proof}
Let $\Sigma = \partial(\hat{S} \times [0,1])$ and let $\{\alpha_1, \ldots, \alpha_n\}$ be the boundary curves of $\hat{S}$. These curves can be seen on $\Sigma$ and thus act on $V_p(\Sigma)$ by operators $\{Z_p(\alpha_1), \ldots, Z_p(\alpha_n)\}$.

We set 
$$ E(\underline{k}) = \ker \Big( Z_p(\alpha_1)+A^{2k_1+2}+A^{-2k_1-2} \Big) \cap \cdots \cap \ker \Big( Z_p(\alpha_n)+A^{2k_n+2}+A^{-2k_n-2} \Big). $$ 
An explicit basis for $E(\underline{k})$ can be extracted from the discussion below.

Since $\hat{S} \times [0,1]$ is a handlebody whose boundary is $\Sigma$, it follows from~\cite{bhmv} that the canonical map $$s : \mathcal{S}_A\big(\hat{S}\big) \to V_p(\Sigma)$$ is surjective. Hence the composition of $s$ with the projection to $E(\underline{k})$ is also surjective. 

We claim that $E(\underline{k})$ is canonically isomorphic to $\mathrm{End}\big(V_p\big(S,\underline{k}\big)\big)$. To see this, we suppose that $S$ is embedded in $S^3$ in a way which is unknotted on both sides, and we build the handlebody \[H = S^3 \setminus \hat{S} \times (0,1).\]

Let $G \subset H$ be a banded trivalent graph such that $H$ retracts to $G$. Recall that $\hat{S}$ was obtained from $S$ by removing $n$ discs $\{D_1, \ldots, D_n\}$ about the points $\{x_1,\ldots,x_n\}$. These discs are included in $H$, and cutting $H$ along these discs results in two handlebodies $H_1$ and $H_2$ bounded by $S$. See Figure~\ref{f:proof}.

 \begin{figure}[h]
\includegraphics[scale=0.15]{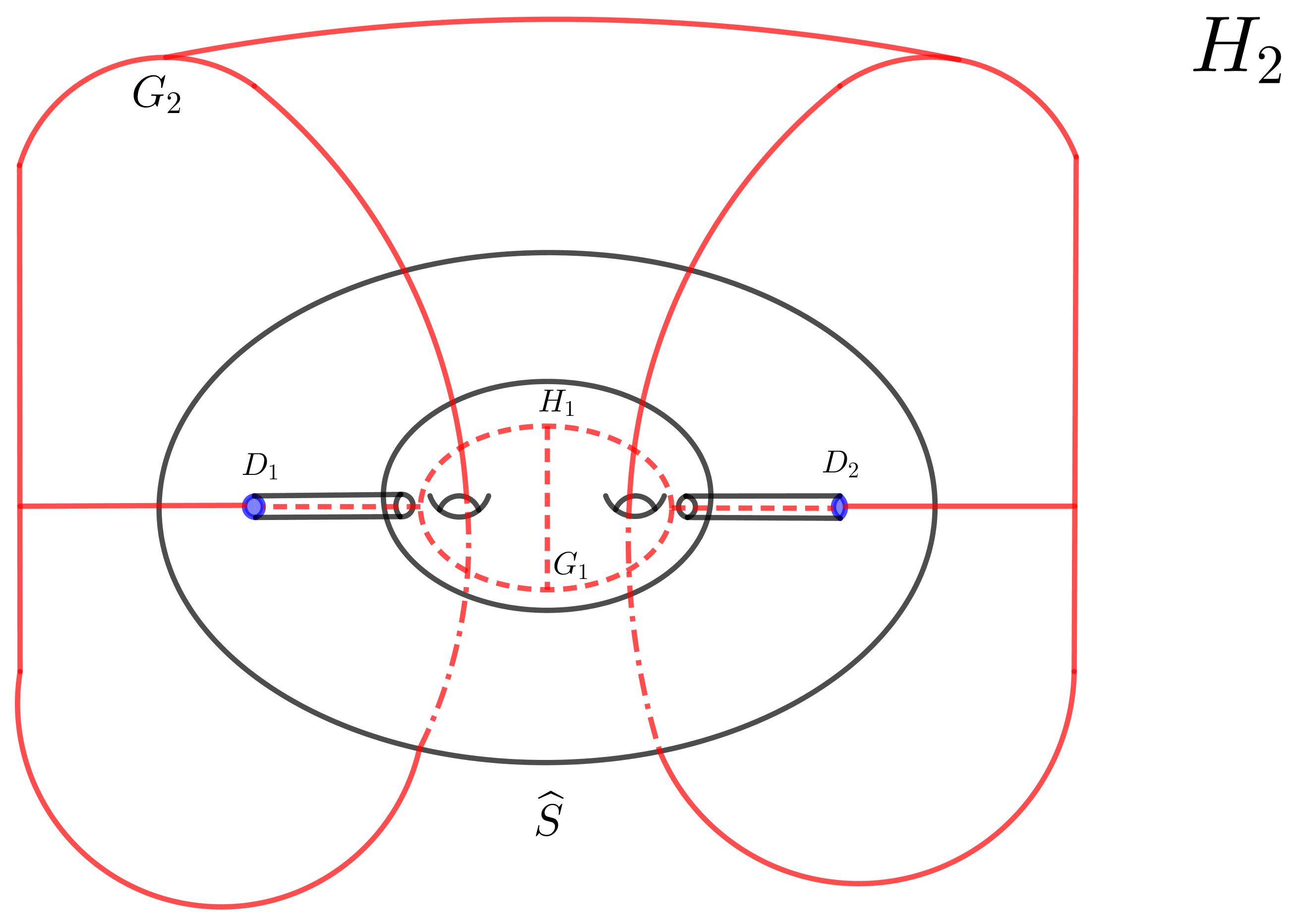}
\caption{This is the setup when $S$ has genus $2$ and when $n=2$. The solid black denotes the surface $\hat{S}\times I$ cut out of $S^3$, which leaves the handlebody $H=H_1\cup H_2$. The handlebodies $H_1$ and $H_2$ meet along discs $D_1$ and $D_2$. The trivalent graph $G=G_1\cup G_2$ (in red) is a retract of $H$. Here, $G_i=G\cap H_i$.}
\label{f:proof}
\end{figure}

We set $G_1 = H_1 \cap G$ and $G_2 = H_2 \cap G$, and let $\Delta(p,\underline{k})$ be the set of $p$-admissible colorings of $G$ such that for $1 \le j \le n$, the edge of $G$ encircled by the curve $\alpha_j$ has color $k_j$. For $c \in \Delta(p,\underline{k})$, we denote by $G^c$ the corresponding colored graph in $V_p(\Sigma)$. Finally, we write $G_1^c$ and $G_2^c$ for the restricted colored graphs. Note that these are naturally elements in $V_p\big(S,\underline{k}\big)$.

Note that $\{ G^c \, | \, c \in \Delta(p,\underline{k}) \}$ is a basis of $E(\underline{k})$, so we can define the linear map 
$$ \phi : G^c \in E(\underline{k}) \mapsto \frac{\langle G^c, G^c \rangle }{\langle G_1^c, G_1^c \rangle  \langle G_2^c, G_2^c \rangle} \, G_1^c \otimes G_2^c \in V_p\big(S,\underline{k}\big)^{\star}\otimes V_p\big(S,\underline{k}\big) = \mathrm{End}\big(V_p\big(S,\underline{k}\big)\big)$$ Here, $\langle  \, . \, , \, . \, \rangle$ is the canonical hermitian form defined on $V_p\big(S,\underline{k}\big)$, and the identification between $V_p\big(S,\underline{k}\big)^{\star}$ and $ V_p\big(S,\underline{k}\big)$ is made via the bilinear pairing. It is not difficult to see that $\phi$ is an isomorphism of vector spaces, because a basis for $E(\underline{k})$ is sent to a basis for $\mathrm{End}\big(V_p\big(S,\underline{k}\big)\big)$.

A straightforward computation using the Hermitian structure shows that the following diagram is commutative:

$$\xymatrix{
     \mathcal{S}_A\big(\hat{S}\big)    \ar[dr]^{Z_p} \ar[r]^{s} & V_p(\Sigma) \ar[r]^{\text{projection}} & E(\underline{k})  \ar[dl]^{\phi}\\
  &  \mathrm{End}\big(V_p\big(S,\underline{k}\big)\big) &  
  }$$

The map $s$ composed with the projection onto $E(\underline{k})$ is surjective, and $\phi$ is an isomorphism. Hence the map $Z_p$ is surjective. 
\end{proof}

\begin{proof}[Proof of Theorem \ref{thm:main}] Combining Proposition \ref{prop:image} and \ref{prop:irred-skein}, we have that if $k_1 = 1$ then $ \C[\Gamma_p] =  \mathrm{End}\big(V_p\big(S,\underline{k}\big)\big) $, and hence $\rho_p$ is irreducible.
\end{proof}

\section{Acknowledgements}
The authors thank J. March\'e and G. Masbaum for helpful discussions. The authors are grateful to the anonymous referee for several helpful comments. The first author is partially supported by Simons Foundation Collaboration Grant number 429836.


\begin{thebibliography}{99}
\bibitem{Andersen} J.E. Andersen and J. Fjelstad. Reducibility of quantum representations of mapping class groups. \emph{ Lett. Math. Phys.}, 91(3) (2010), 215--239.


\bibitem{Birman}
J. Birman. \emph{Braids, links, and mapping class groups}. 
Annals of Mathematics Studies, No. 82. \emph{Princeton University Press, Princeton, N.J.; University of Tokyo Press, Tokyo}, 1974.

\bibitem{bhmv}
C. Blanchet, N. Habegger, G. Masbaum, P. Vogel. Topological quantum field theories derived from the Kauffman bracket. \emph{Topology} 34(4) (1992), no. 4, 883--927.

\bibitem{farbmargalit}
B. Farb and D. Margalit. \emph{A primer on mapping class groups}. Princeton Mathematical Series, 49. \emph{Princeton University Press, Princeton, NJ}, 2012.

\bibitem{KoberdaSantharoubane}T. Koberda and R. Santharoubane. Quotients of surfaces groups and homology of finite covers via quantum representations. \emph{Invent. Math.} 206 (2016), 269-292.


\bibitem{Korinman} J. Korinman. Decomposition of some Reshitekhin-Turaev representations into irreducible factors. arXiv:1406.4389


\bibitem{MasbaumVogel} G. Masbaum and P. Vogel. $3$-valent graphs and the Kauffman bracket. \emph{Pacific J. Math.} 162(2) : 361--381, 1994


\bibitem{Roberts} J. Roberts.Irreducibility of some quantum representations of mapping class groups. \emph{Journal of Knot Theory and its Ramifications } 10(5) 763--767, 2001.
\end{thebibliography}
\end{document}